\theoremstyle{plain}
\newtheorem{theo}{Theorem}
\newcommand{\supp}{\mathop{\rm supp}\nolimits}
\begin{document}
\centerline {\Huge Orthogonal wavelet bases}

\smallskip

\centerline {\Huge  on generalized Vilenkin groups\footnote{The authors were supported by the Russian Science Foundation (grant 23-11-00178)}}

\bigskip

\centerline {\large M. Babushkin and M. Skopina}

\bigskip

\bigskip
{\bf Abstract} 
{\small
  Wavelet systems on the generalized Vilenkin groups are considered. An algorithmic method for the  construction of orthogonal wavelet bases is presented. 
  These bases consist of compactly supported test functions (i.e. functions whose Fourier transform is also compactly supported).
}

\medskip
{\bf Keywords}  Vilenkin group, Walsh function, orthogonal wavelet bases, refinable function. 

\medskip
MSC2020: 42C10, 42C15, 42C40, 43A70.

\bigskip

\section{Introduction}

The well known classical Vilenkin group $V_p$, $p=2,3, \ldots$,  is associated with the  cyclic group of order $p$ (see, e.g., \cite {AVDR}, \cite{SWS}, \cite{GES}). Similar structures, associated with an arbitrary finite abelian group $G$ instead of a cyclic group, were recently introduced in~\cite{VS} and called generalized  Vilenkin groups.  Such a group is also  associated with a dilation matrix $M$ whose set of digits is  isomorphic  to $G$.  The  characters for  generalized Vilenkin groups is given by formulas that are very similar to those for the classical Vilenkin groups (with  $M$ instead of $p$).
The harmonic analysis on such groups is also very similar to harmonic analysis on the usual Vilenkin groups. 
In particular, there exists a class of compactly supported functions whose Fourier transform is also compactly supported (called test functions), that may be considered as an analogue of the Schwartz class in the classical analysis. 

Wavelet frames on the classical  Vilenkin groups have been actively studied, e.g., in~\cite{27},~\cite{25}, \cite{FLS}, \cite{FS23}, ~\cite{FMS19}, in particular, related to the orthogonal wavelet bases see~\cite{Vil}, \cite{F05}, \cite{FMS19}, \cite{26}.
Wavelet frames on the generalized  Vilenkin groups were investigated in~\cite{BS}.

The current paper is devoted to the construction of orthogonal wavelet bases on generalized Vilenkin groups. 

\section{Notation and preliminaries}

As usual,  ${\mathbb Z}^d$ denotes the integer lattice of the Euclidean space  ${\mathbb R}^d$, and $\mathbf{0}$ is the  zero vector in ${\mathbb R}^d$. The characteristic function of a set $Y\subset {\mathbb R}^d $ is denoted by $\mathbf{1}_Y$.
An  integer  $d\times d$-matrix $M$ is called a dilation matrix, if  all its  eigenvalues are greater than~1 in absolute value. The class of such matrices will be denoted by  $\mathfrak{M}_d$. It is well known (see, e.g., \cite[\S~2.2]{NPS}) that 
for any matrix $M\in\mathfrak{M}_d$, there holds 
\begin{equation*}
  \sum_{n=1}^{\infty}\|M^{-n}\|^{\delta} < \infty \quad \forall \delta>0. 
\end{equation*}
For  $M\in\mathfrak{M}_d$ we set  $m:=|\det M|$, and  the matrix transposed to $M$ will be denoted by $M^*$. 

Vectors $l,k\in\mathbb{Z}^d$ is said to be {\it congruent modulo} $M$   (write $l \equiv k\,({\rm mod}\, M)$) if $l-k = Ms$ for some $s\in\mathbb{Z}^d$.
For $M\in\mathfrak{M}_d$, a set  $D=D(M)= \{s_0, \,s_1, \dots, s_{m-1}\}$, where  $s_0=\mathbf{0}$, $s_j\in \mathbb{Z}^d$ and $s_i\not\equiv s_j \,({\rm mod}\, M)$ whenever $i\neq j$,  is called a  {\it set of digits} for matrix $M$.
Following~\cite{VS}, let us consider an algebraic structure  $V$, consisting of the sequences $x=\{x_j\}_{j\in\mathbb Z}$, where $x_j\in D(M)$ and $x_j$ are non-zero only for a finite number of negative $j$, equipped 
with  the coordinate-wise addition modulo~$M$ (denoted by $\oplus$). It is easy to see that  $V$ is an abelian group. Denote by $\theta$ the neutral group element. The topology on $V$ is introduced via the complete system of the following neighbourhoods of  $\theta$:  
$$
U_l:=\{x\in V\,:\ x_j=0 \   \mbox{for all}\ j\le l\},  \quad l\in {\mathbb Z}, \quad U:=U_0.
$$
Thus, we have a topological group $V$, that is called {\it generalized Vilenkin group} associated with 
a  matrix $M$ and a set of its digits $D(M)$. It is not difficult to check that $V$ is a locally compact group,
which yields that  there exists 
the unique  Haar measure $\mu$ on $V$, normalized by the equality $\mu U=1$. 
It is known that this measure is invariant under the group operation. The Lebesgue integral $\int_S f d\mu$ is defined in the  standard way.

The  generalized Vilenkin group 
associated with the matrix  $M^*$ and  a set of its digits  $D^* = D(M^*)=\{s^*_0, \,s^*_1, \dots, s^*_{m-1}\}$ 
will be denoted by $V^*$. The sets $U_l^*$, $U^*$
are introduced in $V^*$ similarly to the sets $U_l$, $U$  in $V$, and $\mu^*$ denotes the normalized Haar measure on $V^*$. 

Let now $G$ be an arbitrary finite abelian group  of order $m$. Since there exists a dilation matrix $M$ such that $| \det M|=m$ and  the group of its  digits is isomorphic to $G$ 
(see \cite[Theorem 1]{VS}), the following  is well defined: 
the {\it generalized Vilenkin group associated with $G$ }  is 
the  generalized Vilenkin group 
associated with a matrix  $M$ whose set of digits is isomorphic to $G$. 

It is also proved in~\cite{VS}, that any continuous  character $\mathrm{ch}(x)$  of a generalized Vilenkin group $V$ is associated with some $\omega\in V^*$ and  given by 
$$
\mathrm{ch}(x)=\chi(x,\omega): =  \exp\left({2\pi i}\sum\limits_{j\in {\mathbb Z}} \langle M^{-1}x_j,\omega_{1-j}\rangle\right), \quad x\in V.
$$
It follows that the group of characters of $V$  and the group $V^*$ are isomorphic, and not only as the algebraic groups, but also as topological groups (see~\cite{VS}).

According to the general theory of  harmonic analysis on the topological groups,
the Fourier transform on $f\in  L_1(V)$ is defined by the formula
$$
\hat{f}(\omega) =\int\limits_V f(x)\overline{\chi(x,\omega)}\,d\mu(x), \quad  \omega \in V^{*},
$$
and it extends to the space $ L_2(V)$ in the standard way.
Let  $  \mathcal M$ be the mapping  on $V$ defined as follows: if $x=\{x_j\}_{j=-\infty}^{+\infty}\in V$, then ${ \mathcal M} x$ is the element of $V$ such that 
$({ \mathcal M} x)_j=x_{j+1}$. Let 
$$
H  := \{\omega\in  V\,: \ \omega_j=\mathbf{0} \quad\mbox{for all}\quad j>0 \}.
$$  
For every $ k\in\mathbb{Z}_+$, denote by $\gamma_{[k]}$ the element of $H$ such that   $(\gamma_{[k]})_{-j}=s_{k_j}$, $k_j\in\{0,1,\dots, m-1\}$ and  $	k=\sum_{j=0}^{\infty} k_j m^{j}$. Set also 
$$
U_{n,k}:={\mathcal M}^{-n} \gamma_{[k]} + {\mathcal M}^{-n}( U),   \quad  n\in\mathbb{Z},\  k\in\mathbb{Z}_+. 
$$
The mapping ${\mathcal M}^*$ and the sets  $U_{n,k}^*$, $H^*$   are introduced for $V^*$ in the same way.

Let $ k\in \mathbb{Z}_{+}$, then the function $W_k$  defined on $V^*$ by 
$$ W_k:=\chi(\gamma_{[k]},\cdot), $$ 
is called the {\it Walsh function}, and finite linear combination of Walsh functions $W_0,\dots, W_{m^n-1}$ is called the Walsh polynomial of order $n$. It is proved in~\cite{VS}, that the 
Walsh functions $W_k$ are $H^*$-periodic,  constant on each set $U^*_{n,s}$,  $s \in\mathbb{Z}_+$, $0 \leq k \leq m^{n}-1$, and the Walsh system  $\{ W_k \}_{k=0}^{\infty}$ is an orthonormal basis in $L_{2}(U^{*})$.

Note it is explained in~\cite{VS} that the usual properties of the Fourier transform on $L_2(V)$ hold true, in particular, we have the Plancherel theorem, existence of the inverse Fourier transform and the following formulas
\begin{align}
  &\widehat{f(\cdot\oplus \gamma)}=\widehat f \chi(\gamma,\cdot) \quad \forall  \gamma\in H, \label{Fourier_shift}\\
  &\int\limits_V f({\mathcal M}^j\cdot)\ d\mu=\frac1{m^j}\int\limits_V f \ d\mu,\notag\\
  &\widehat{f({\mathcal M}^j\cdot)} =\frac1{m^j}\widehat f({{\mathcal M}^*}^{-j}\cdot).\notag
\end{align}

Let us introduce the following classes of functions $f$ defined on $V$:
$$
\mathcal{S}_n(V) := \{ f\,: \ f(x)=f(x')\quad\mbox{for all}\quad x, x'\in U_{n,k},
\  k\in {\mathbb Z}_+\},
$$
$$
\mathcal{S}_n^{(k)}(V) := \{ f\in \mathcal{S}_n(V)\,: \ \supp f \subset {\mathcal M}^k(U) \}, \quad
\mathcal{S}(V) := \bigcup_{n,k \in \mathbb{Z}} \mathcal{S}_n^{(k)}(V).
$$
The elements of $\mathcal{S}(V)$ are called the {\it test functions}.

The classes $\mathcal{S}_n(V^*)$,  $\mathcal{S}_n^{(k)}(V^*)$ and $\mathcal{S}(V^*)$  are defined similarly.
\begin{theo}\cite[Theorem 7]{VS}
  \label{p8.2}
  For any integers $n$ and $k$ there holds 
  $$
  f \in \mathcal{S}_n^{(k)}(V) \ \Longleftrightarrow \ \widehat{f} \in \mathcal{S}_k^{(n)}(V^*). 
  $$
\end{theo}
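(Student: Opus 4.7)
The plan is to split the biconditional into two structural statements---matching the support constraint on $f$ to periodicity of $\widehat f$ on the one hand, and piecewise-constancy of $f$ to the support of $\widehat f$ on the other---then combine them. First I would show: $\supp f\subset\mathcal M^kU$ holds if and only if $\widehat f$ is constant on every coset of $\mathcal M^{*-k}U^*$ in $V^*$, i.e.\ $\widehat f\in\mathcal S_k(V^*)$. The forward direction reduces to the identity $\chi(x,\eta)=1$ whenever $x\in\mathcal M^kU$ (so $x_j=\mathbf 0$ for $j\le -k$) and $\eta\in\mathcal M^{*-k}U^*$ (so $\eta_j=\mathbf 0$ for $j\le k$): each summand in the exponent $\sum_{j}\langle M^{-1}x_j,\eta_{1-j}\rangle$ vanishes because either $x_j=\mathbf 0$ or $\eta_{1-j}=\mathbf 0$. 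Hence $\widehat f(\omega\oplus\eta)-\widehat f(\omega)=\int_V f(x)\overline{\chi(x,\omega)}(\overline{\chi(x,\eta)}-1)\,d\mu(x)=0$ for every such $\eta$, which is the required periodicity.

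Next I would show: for $f\in L_1(V)$, the condition $f\in\mathcal S_n(V)$ implies $\supp\widehat f\subset\mathcal M^{*n}U^*$. The key identity is $\widehat{\mathbf 1_U}=\mathbf 1_{U^*}$: the $k=0$ case of the computation above gives $\widehat{\mathbf 1_U}\equiv 1$ on $U^*$, while for $\omega\notin U^*$ one picks $x_0\in U$ with $\chi(x_0,\omega)\ne 1$ and uses translation invariance of $\mu|_U$ under $x\mapsto x\oplus x_0$ to obtain $\widehat{\mathbf 1_U}(\omega)=\overline{\chi(x_0,\omega)}\,\widehat{\mathbf 1_U}(\omega)$, forcing it to vanish. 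The scaling rule then yields $\widehat{\mathbf 1_{U_n}}=m^{-n}\mathbf 1_{\mathcal M^{*n}U^*}$, and because each $U_{n,j}$ is a translate of $U_n$ by $\mathcal M^{-n}\gamma_{[j]}$, the shift rule \eqref{Fourier_shift} (which extends to any $\gamma\in V$ by the same Haar-invariance argument) multiplies $\widehat{\mathbf 1_{U_n}}$ by a unimodular character and preserves its support.

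Combining the two parts for $f\in\mathcal S_n^{(k)}(V)$---and noting that the support constraint forces $f$ to be a \emph{finite} linear combination of the $\mathbf 1_{U_{n,j}}$---gives $\widehat f\in\mathcal S_k^{(n)}(V^*)$. The converse follows from the symmetric argument carried out on $V^*$ together with Fourier inversion, since every construction ($\mathcal M^*$, $U^*_{n,k}$, $H^*$, \ldots) admits the same treatment. The only delicate point I anticipate is index bookkeeping for arbitrary signs of $n,k\in\mathbb Z$: one must verify the identifications $\mathcal M^kU=U_{-k}$ and $\mathcal M^{*-k}U^*=U^*_k$, and check that the character-parity argument used to establish $\chi(x,\eta)=1$ goes through unchanged when $n$ or $k$ is negative. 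Beyond that, everything is driven by the three Fourier identities in the excerpt together with the elementary character computation.
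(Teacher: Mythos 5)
The paper does not prove this statement at all: it is imported verbatim as Theorem~7 of reference [VS] (Vodolazov--Skopina), so there is no in-paper proof to compare against. Judged on its own, your argument is sound and is the standard duality proof one would expect: the two key reductions --- (i) $\supp f\subset U_{-k}$ forces $\chi(x,\eta)=1$ for $\eta\in U^*_k$, hence $U^*_k$-periodicity of $\widehat f$, and (ii) $\widehat{\mathbf 1_{U_{n,j}}}$ is a unimodular multiple of $m^{-n}\mathbf 1_{{\mathcal M^*}^{n}U^*}$, hence $\supp\widehat f\subset {\mathcal M^*}^{n}U^*$ for $f$ a finite combination of the $\mathbf 1_{U_{n,j}}$ --- are both correct, and the index checks ($\mathcal M^kU=U_{-k}$, ${\mathcal M^*}^{-k}U^*=U^*_k$) work out for all signs, with the degenerate cases $n+k\le 0$ collapsing consistently on both sides. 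Two points you should make explicit rather than assert: first, the vanishing of $\widehat{\mathbf 1_U}$ off $U^*$ needs the nondegeneracy of the pairing $(s,s^*)\mapsto\exp(2\pi i\langle M^{-1}s,s^*\rangle)$ between $D$ and $D^*$ (equivalently, that the annihilator of $U$ in $V^*$ is exactly $U^*$), which follows since $\langle M^{-1}s,s^*\rangle\in\mathbb Z$ for all $s\in\mathbb Z^d$ forces $s^*\in M^*\mathbb Z^d$; second, in the inversion step for the converse you should note that the inverse transform differs from the forward transform on $V^*$ by a group reflection, and that the classes $\mathcal S_n^{(k)}$ are reflection-invariant because $U_n$ and $U_{-k}$ are subgroups. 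Note also that you state part (i) as an equivalence but only prove the forward implication; this is harmless, since your converse is obtained by symmetry and inversion rather than by the reverse implications, but the statement should be weakened to match what is actually used.
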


\section{Wavelets on $V$}

A function $\varphi\in L_2(V)$ is called {\it refinable} if there exist numbers $\alpha_k$, $k\in \mathbb{Z}_{+}$, such that
\begin{equation}
  \label{refinable}
  \varphi(x)=\sum_{k=0}^\infty \alpha_k \varphi({\mathcal M}x \oplus \gamma_{[k]}) \quad \forall x\in V.
\end{equation}

\begin{theo}\cite{BS}
  If $\varphi\in L_2(V)$ is a compactly supported refinable function, then
  \begin{equation}
	\widehat \varphi= m_0({{\mathcal M}^*}^{-1}\cdot)\widehat \varphi({{\mathcal M}^*}^{-1}\cdot),
    \label{refinement_eq_Fourier}
  \end{equation}
  where $m_0$ (called the mask) is a Walsh polynomial on $V^*$.
\end{theo}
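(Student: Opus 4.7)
The plan is to Fourier transform the refinement equation \eqref{refinable} term by term, using the shift identity \eqref{Fourier_shift} together with the dilation formulas immediately following it, and then to reduce the resulting series to a finite one. For each summand, since $\chi(\gamma_{[k]},\cdot)=W_k$ by the definition of the Walsh functions, we obtain
\[
  \widehat{\varphi({\mathcal M}\cdot\oplus\gamma_{[k]})}(\omega)=\frac{1}{m}W_k({{\mathcal M}^*}^{-1}\omega)\,\widehat\varphi({{\mathcal M}^*}^{-1}\omega).
\]
Summing against the $\alpha_k$ gives $\widehat\varphi(\omega)=m_0({{\mathcal M}^*}^{-1}\omega)\widehat\varphi({{\mathcal M}^*}^{-1}\omega)$ with $m_0:=\frac{1}{m}\sum_k\alpha_k W_k$. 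The remaining task is to show that the sum may be truncated so that $m_0$ is a genuine Walsh polynomial.

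By compact support there exists $N\ge 0$ with $\supp\varphi\subset U_{-N}={\mathcal M}^N(U)$. I would group the $\gamma_{[k]}$ by their coset modulo $U_{-N}$: for each such coset $C$ write $K(C):=\{k:\gamma_{[k]}\in C\}$ and $S_C(x):=\sum_{k\in K(C)}\alpha_k\varphi({\mathcal M}x\oplus\gamma_{[k]})$, so that $\varphi=\sum_C S_C$. Since $\supp\varphi\subset U_{-N}$, each summand $\varphi({\mathcal M}\cdot\oplus\gamma_{[k]})$ is supported in ${\mathcal M}^{-1}(U_{-N}\ominus\gamma_{[k]})$, a coset of $U_{-N+1}$ that depends only on $C$, not on the individual representative. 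Moreover ${\mathcal M}^{-1}$ sends distinct $U_{-N}$-cosets of $V$ to distinct $U_{-N+1}$-cosets, so the supports of $S_{C_1}$ and $S_{C_2}$ lie in disjoint $U_{-N+1}$-cosets whenever $C_1\ne C_2$.

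Because $\varphi=\sum_C S_C$ is supported in $U_{-N}$ and the $\supp S_C$ are pairwise disjoint, each $\supp S_C$ must lie in $U_{-N}$. The $U_{-N+1}$-coset carrying $\supp S_C$ lies inside $U_{-N}$ precisely when $\gamma_{[k]}\in U_{-N-1}$, i.e.\ when $k<m^{N+1}$; for larger $k$ the sum $S_C$ therefore vanishes identically as a function on $V$. Replacing $\alpha_k$ by $0$ for every $k\ge m^{N+1}$ leaves the refinement equation valid, and the Fourier computation of the first paragraph then produces the mask
\[
  m_0=\frac{1}{m}\sum_{k=0}^{m^{N+1}-1}\alpha_k W_k,
\]
which is a Walsh polynomial of order $N+1$.

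The main obstacle is the coset bookkeeping: one has to verify that ${\mathcal M}^{-1}$ is injective on $U_{-N}$-cosets with image in $U_{-N+1}$-cosets, and to translate the geometric condition $\gamma_{[k]}\in U_{-N-1}$ into the arithmetic bound $k<m^{N+1}$ via the $m$-adic expansion built into the definition of $\gamma_{[k]}$. Once those coset computations are in place, every other step is a direct application of the formulas collected in Section~2.
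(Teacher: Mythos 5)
The paper itself states this theorem without proof, merely citing \cite{BS}, so there is no in-paper argument to compare yours against; your derivation is the standard one and is correct. Transforming termwise via the shift and dilation formulas to get $m_0=\frac1m\sum_k\alpha_k W_k$, and then using compact support to truncate — grouping the $\gamma_{[k]}$ by cosets modulo $U_{-N}$, observing that the blocks $S_C$ have pairwise disjoint supports in $U_{-N+1}$-cosets so that each block supported outside $U_{-N}$ must vanish identically, and translating $\gamma_{[k]}\in U_{-N-1}$ into $k<m^{N+1}$ — correctly yields a Walsh polynomial mask of order $N+1$ (one should only make sure, as your final paragraph implicitly does, that the truncation is justified \emph{before} the termwise Fourier transform is applied, so that only a finite sum is ever transformed).
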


Let $m_0$ be the mask of a refinable function  $\varphi$.
Suppose that there exist Walsh polynomials $m_1,\dots, m_r$, $r\ge m-1$, such that 
\begin{equation}
  \label{wav}
  \sum_{\nu=0}^rm_\nu({\mathcal M^*}^{-1}(\cdot\oplus s^*_k))\overline{m_\nu({\mathcal M^*}^{-1}(\cdot\oplus s^*_l))}=\delta_{kl}.
\end{equation}
The functions $\psi^{(\nu)}$, $\nu=1,\dots, r$, defined by 
$$
\widehat {\psi^{(\nu)}}= m_\nu({\mathcal M^*}^{-1}\cdot)\widehat \varphi({\mathcal M^*}^{-1}\cdot),\quad \nu=1,\dots, r, 
$$
are called {\it wavelet functions}. The corresponding {\it wavelet system}
$$
\psi_{jk}^{(\nu)}(x):=m^{j/2}\psi^{(\nu)}({\mathcal M}^j x\oplus \gamma_{[k]}), 
\quad \nu=1,\dots, r,\ \ j\in \mathbb Z, \ \ k\in \mathbb Z_+,
$$
is said to be {\it generated by $\varphi$}. Obviously, for the existence of wavelet system generated by $\varphi$,
it is necessary that
\begin{equation}
  \sum_{k=0}^{m-1} |m_0({\mathcal M^*}^{-1}(\omega\oplus s^*_k))|^2\le 1\quad \forall \omega\in V^*.
  \label{aaa}
\end{equation}

\begin{theo}\cite{BS}
  \label{thm:parseval-frame}
  Let $\varphi\in L_2(V)$ be a compactly supported  refinable function and $\widehat\varphi(\theta)=1$. Then a wavelet system 
  $\{\psi^{(\nu)}_{jk}\}_{j,k,\nu}$ generated by $\varphi$ is a Parseval frame in $L_2(V)$, i.e., 
  \begin{equation}
    \label{pars}
	\sum_{\nu=1}^r\sum_{j\in \mathbb Z}\sum_{ k\in \mathbb Z_+}|\langle f, \psi^{(\nu)}_{jk}\rangle|^2=\|f\|_2^2 
	\quad \forall f\in L_2(V).
  \end{equation}
\end{theo}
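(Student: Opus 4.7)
The plan is to establish the Parseval identity \eqref{pars} via the unitary extension principle (UEP) paradigm adapted to the generalized Vilenkin setting. The argument takes place entirely on the Fourier side, where condition \eqref{wav} is exactly the algebraic identity that forces cross terms to cancel in a telescoping sum over scales.

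First I would carry out a Fourier computation. Using \eqref{Fourier_shift} and the dilation formula, one obtains
\[
\widehat{\varphi_{j,k}}(\omega) = m^{-j/2}\,\widehat\varphi({\mathcal{M}^*}^{-j}\omega)\,\chi(\gamma_{[k]},{\mathcal{M}^*}^{-j}\omega),
\]
and an analogous formula for $\widehat{\psi^{(\nu)}_{j,k}}$ with the extra factor $m_\nu({\mathcal{M}^*}^{-j-1}\omega)$ and $\widehat\varphi$ evaluated at scale $j+1$. Then, fixing $j$, I write $\langle f,\varphi_{j,k}\rangle = \langle \widehat f,\widehat{\varphi_{j,k}}\rangle$, change variables by ${\mathcal{M}^*}^{-j}$, and split the resulting integral along the tiling $V^*=\bigsqcup_{\gamma^*\in H^*}(\gamma^*\oplus U^*)$. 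Since $W_k=\chi(\gamma_{[k]},\cdot)$ is $H^*$-periodic and $\{W_k\}_{k\in\mathbb{Z}_+}$ is an orthonormal basis of $L_2(U^*)$, Parseval on $U^*$ yields
\[
A_j(f):=\sum_{k\in\mathbb{Z}_+}|\langle f,\varphi_{j,k}\rangle|^2 = m^j\int_{U^*}\Bigl|\sum_{\gamma^*\in H^*}\widehat f({\mathcal{M}^*}^{j}(\omega\oplus\gamma^*))\,\overline{\widehat\varphi(\omega\oplus\gamma^*)}\Bigr|^2 d\mu^*(\omega),
\]
with a parallel expression for $D_j^{(\nu)}(f):=\sum_{k}|\langle f,\psi^{(\nu)}_{j,k}\rangle|^2$ obtained by inserting the extra $m_\nu$ factor.

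The heart of the proof is the telescoping identity $A_{j+1}(f) = A_j(f) + \sum_{\nu=1}^r D_j^{(\nu)}(f)$ for $f\in \mathcal{S}(V)$. To derive it I would substitute the refinement equation \eqref{refinement_eq_Fourier} into the expression for $A_{j+1}(f)$ to introduce an $m_0$ factor, refine the $H^*$-sum via $H^*=\bigsqcup_{s^*\in D^*}(s^*\oplus\mathcal{M}^*H^*)$, and expand the square. The resulting cross terms, indexed by pairs $(s^*_k,s^*_l)\in D^*\times D^*$, carry the factor $m_0({\mathcal{M}^*}^{-1}(\cdot\oplus s^*_k))\overline{m_0({\mathcal{M}^*}^{-1}(\cdot\oplus s^*_l))}$. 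Adding the analogous cross-term expansion of $\sum_\nu D_j^{(\nu)}(f)$ reassembles the full left-hand side of \eqref{wav}; condition \eqref{wav} then annihilates the off-diagonal cross terms and collapses the diagonal ones to $\delta_{kk}=1$, leaving exactly $A_j(f)$.

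Telescoping this identity between $-N$ and $N$ gives $\sum_{j=-N}^{N}\sum_{\nu=1}^r D_j^{(\nu)}(f) = A_{N+1}(f) - A_{-N}(f)$, so it remains to verify $A_{N+1}(f)\to\|f\|_2^2$ and $A_{-N}(f)\to 0$ as $N\to+\infty$ for a test function $f$. Here Theorem~\ref{p8.2} supplies $\widehat f\in\mathcal{S}(V^*)$ with compact support, continuity of $\widehat\varphi$ follows from $\varphi\in L_1(V)$, and $\widehat\varphi(\theta)=1$ ensures that for $N$ large only the $\gamma^*=\theta$ term of the inner sum hits $\supp\widehat f$ while $\widehat\varphi({\mathcal{M}^*}^{-j}\omega)\to 1$ uniformly there, so Plancherel on $V^*$ gives $\|f\|_2^2$; the vanishing $A_{-N}(f)\to 0$ comes from the $m^j$ prefactor combined with uniform bounds on $\widehat f$ and $\widehat\varphi$. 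Density of $\mathcal{S}(V)$ in $L_2(V)$ plus a Bessel bound obtained from \eqref{pars} on $\mathcal{S}(V)$ via Fatou extends the identity to all of $L_2(V)$. The main obstacle is the telescoping step: the coset bookkeeping between the $H^*$-sum and the $D^*$-sum has to be carried out carefully, and the precise way \eqref{wav} cancels the cross terms identified. The limit analysis and the density extension are routine once that algebraic identity is in place.
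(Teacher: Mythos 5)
First, a caveat about the comparison itself: the paper does not prove this theorem --- it is quoted from the reference \cite{BS}, so there is no in-paper proof to measure you against. Your UEP-style telescoping argument is nonetheless the standard route for statements of this type and is surely close to what the cited source does: the Fourier formulas, the use of the Walsh orthonormal basis of $L_2(U^*)$ to convert $\sum_k|\langle f,\varphi_{jk}\rangle|^2$ into $m^j\int_{U^*}|\sum_{\gamma^*\in H^*}\widehat f(\mathcal{M}^{*j}(\cdot\oplus\gamma^*))\overline{\widehat\varphi(\cdot\oplus\gamma^*)}|^2\,d\mu^*$, the identity $A_{j+1}=A_j+\sum_\nu D_j^{(\nu)}$ via the coset splitting $H^*=\bigsqcup_{s^*\in D^*}(s^*\oplus\mathcal{M}^*H^*)$ together with~(\ref{wav}), and the density/Bessel extension are all sound. (A presentational point: the cancellation is cleanest if you expand $A_j+\sum_\nu D_j^{(\nu)}$ and let~(\ref{wav}) collapse the double sum over $D^*\times D^*$ to its diagonal, which reassembles into $A_{j+1}$; your description of substituting the refinement equation into $A_{j+1}$ and being ``left with $A_j$'' has the direction garbled, though the identity you state is the right one.)

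The one genuine gap is the limit $A_{-N}(f)\to 0$. You claim it follows from ``the $m^j$ prefactor combined with uniform bounds on $\widehat f$ and $\widehat\varphi$'', and that step fails as stated: if $\supp\widehat f\subset\mathcal{M}^{*K}U^*$, the number of $\gamma^*\in H^*$ contributing to the periodization at level $j=-N$ is of order $m^{K+N}$, so Cauchy--Schwarz together with $\int_{U^*}\sum_{\gamma^*}|\widehat\varphi(\cdot\oplus\gamma^*)|^2\,d\mu^*=\|\varphi\|_2^2$ gives only $A_{-N}(f)\le m^{K}\|\widehat f\|_\infty^2\|\varphi\|_2^2$: the prefactor $m^{-N}$ is exactly eaten by the growth of the number of terms, and you get boundedness, not decay. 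To obtain the vanishing you need an additional ingredient, for instance: take the dense class of $f$ with $\widehat f$ bounded, compactly supported and vanishing on a neighbourhood of $\theta$, isolate the diagonal term $\int_{V^*}|\widehat f(v)|^2|\widehat\varphi(\mathcal{M}^{*N}v)|^2\,d\mu^*(v)$, which tends to $0$ by the Riemann--Lebesgue decay of $\widehat\varphi$ (available since a compactly supported $\varphi\in L_2(V)$ lies in $L_1(V)$), and control the off-diagonal terms separately. Without some such device the telescoping series has no established starting value. The $j\to+\infty$ limit, by contrast, is fine as you describe it: $\widehat\varphi$ is locally constant near $\theta$ with value $1$, so $A_{N}(f)$ stabilizes at $\|\widehat f\|_2^2$ for large $N$.
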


Because of a well known property of the Parseval frames (see, e.g., \cite[Proposition~1.8.2]{NPS}), it follows from~(\ref{pars}) that
\begin{equation*}
  f=\sum_{\nu=1}^r\sum_{j\in \mathbb Z}\sum_{ k\in \mathbb Z_+}\langle f, \psi^{(\nu)}_{jk}\rangle \psi^{(\nu)}_{jk} 
  \quad \forall f\in L_2(V).
\end{equation*}

Obviously, if the system $\{\psi^{(\nu)}_{jk}\}_{\nu,j,k}$ is orthonormal and forms a Parseval frame, then it is an orthonormal basis.

A refinable function $\varphi$ is called {\it orthogonal} if the system $\{\varphi(\cdot\oplus h) \mid h\in H\}$ is 
orthonormal in $L_2(V)$.

\begin{theo}
  \label{11}
  Let $\varphi\in L_2(V)$. The system $\{\varphi(\cdot\oplus h)\mid h\in H\}$  is orthonormal in $L_2(V)$ if and only if
  $$
  \sum\limits_{h^*\in H^*}|\widehat\varphi(\cdot \oplus h^*)|^2=1\quad a.e.
  $$
\end{theo}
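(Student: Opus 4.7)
My plan is to translate the orthonormality condition into the Fourier domain via Plancherel, rewrite the resulting integral as a Walsh-Fourier coefficient of the periodization
\[
\Phi:=\sum_{h^*\in H^*}|\widehat\varphi(\cdot\oplus h^*)|^2
\]
on $U^*$, and then invoke the fact that $\{W_k\}_{k\ge 0}$ is an orthonormal basis of $L_2(U^*)$.

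By Haar-measure invariance, orthonormality of $\{\varphi(\cdot\oplus h)\}_{h\in H}$ is equivalent to $\langle\varphi(\cdot\oplus h),\varphi\rangle_{L_2(V)}=\delta_{h,\theta}$ for every $h\in H$. Applying Plancherel together with the shift rule~(\ref{Fourier_shift}), this inner product becomes $\int_{V^*}|\widehat\varphi(\omega)|^2\chi(h,\omega)\,d\mu^*(\omega)$. Every $\omega\in V^*$ decomposes uniquely as $\omega=u\oplus h^*$ with $u\in U^*$ and $h^*\in H^*$, so $V^*=\bigsqcup_{h^*\in H^*}(U^*\oplus h^*)$; moreover every $h\in H$ equals $\gamma_{[k]}$ for a unique $k\in\mathbb Z_+$, so $\chi(h,\cdot)=W_k$ is $H^*$-periodic. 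Splitting along this tiling, using the periodicity, and applying monotone convergence to swap sum and integral, I obtain
\[
\int_{V^*}|\widehat\varphi|^2\chi(h,\cdot)\,d\mu^*=\int_{U^*}\Phi(u)\,\chi(h,u)\,d\mu^*(u).
\]
The interchange is legitimate since the integrand is nonnegative (after absorbing $|\chi|\equiv 1$) and $\int_{U^*}\Phi\,d\mu^*=\|\widehat\varphi\|_2^2=\|\varphi\|_2^2<\infty$, which also shows $\Phi\in L_1(U^*)$.

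The orthonormality condition thus reads exactly that the Walsh-Fourier coefficients of $\Phi$ on $U^*$ coincide with those of the constant function $1$. Since $\{W_k\}_{k\ge 0}$ is an orthonormal basis of $L_2(U^*)$ and Walsh polynomials are dense in $C(U^*)$, this forces $\Phi=1$ a.e.\ on $U^*$, and by $H^*$-periodicity a.e.\ on $V^*$, which is the desired identity. The main subtle point is this last uniqueness step, carried out in $L_1(U^*)$ rather than $L_2(U^*)$: I would handle it by testing $\Phi-1\in L_1(U^*)$ against arbitrary Walsh polynomials and appealing to their uniform density in $C(U^*)$ (using that $\mu^*(U^*)=1$) to conclude $\Phi-1=0$ almost everywhere. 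The converse implication is immediate: if $\Phi\equiv 1$ then all the integrals above are $\delta_{h,\theta}$ by orthonormality of the Walsh system.
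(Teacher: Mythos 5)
Your proof follows essentially the same route as the paper: periodize $|\widehat\varphi|^2$ over $H^*$, identify the inner products $\langle\varphi,\varphi(\cdot\oplus\gamma_{[k]})\rangle$ with the Walsh--Fourier coefficients of the periodization on $U^*$ via Plancherel, the tiling $V^*=\bigsqcup_{h^*\in H^*}(U^*\oplus h^*)$ and the shift formula~(\ref{Fourier_shift}), and conclude by uniqueness of Walsh--Fourier coefficients. Your treatment is in fact slightly more careful at the final step, where you observe that the periodization is a priori only in $L_1(U^*)$ and justify uniqueness there via density of Walsh polynomials, whereas the paper simply invokes the orthonormal-basis property of $\{W_k\}$ in $L_2(U^*)$.
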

\begin{proof}
  The function $F=\sum\limits_{h^*\in H^*}|\widehat\varphi(\cdot \oplus h^*)|^2$ is $H^*$-periodic. Also, $F$ is summable on $U^*$ because
  $$
  \int\limits_{U^*}F\ d{\mu^*}=\sum\limits_{h^*\in H^*}\int\limits_{U^*\oplus h^*}|\widehat\varphi|^2\ d\mu^* = \int\limits_{V^{*}}|\widehat{\varphi}|^{2}\,d\mu^{*}<+\infty.
  $$
  If $ k\in\mathbb{Z}_+$, then
  \begin{align*}
    &\int\limits_{U^*}F\overline{W_k}\ d{\mu^*}=
      \int\limits_{U^*}\overline{W_{k}}\sum\limits_{h^*\in H^*}|\widehat\varphi(\cdot \oplus h^*)|^2\ d{\mu^*}
    \\
    &= \sum\limits_{h^*\in H^*}\int\limits_{U^*\oplus h^*}\overline{W_{k}}|\widehat\varphi|^2\ d{\mu^*}
      = \int\limits_{V^{*}}\overline{W_{k}}|\widehat\varphi|^2\ d{\mu^*}.
  \end{align*}
  Due to the Parseval equality and the formula~(\ref{Fourier_shift}) it follows that
  $$
  \int\limits_{U^*}F\overline{W_k}\ d{\mu^*}
  =\int\limits_{V^*}\widehat\varphi\overline{\chi(\gamma_{[k]},\cdot)\widehat\varphi}\ d{\mu^*}
  =\int\limits_V \varphi\overline{\varphi(\cdot\oplus\gamma_{[k]})}\ d\mu.
  $$
  Since the left hand side is nothing as the Fourier coefficient of $F$ with respect to 
  the system $\{W_k\}_{k\in \mathbb{Z}_+}$, which is  the orthonormal basis for $L_2(U^*)$, 
  this equality implies our statement.  
\end{proof}

\begin{theo}
  If a refinable function $\varphi$ with compact support is orthogonal, then its mask $m_0$ satisfies 
  \begin{equation}
    \sum\limits_{k=0}^{m-1}|m_0({\mathcal M^*}^{-1}(\omega\oplus s^*_k))|^2 =1 \quad \forall \omega\in V^*.
    \label{aaa1}
  \end{equation}
\end{theo}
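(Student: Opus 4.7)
The plan is to combine Theorem~\ref{11} with the refinement equation~\eqref{refinement_eq_Fourier}, mirroring the classical argument from wavelet theory on $\mathbb{R}^d$. By Theorem~\ref{11}, orthogonality of $\varphi$ gives $\sum_{h^*\in H^*}|\widehat\varphi(\xi\oplus h^*)|^2=1$ a.e. First I would substitute $\xi={\mathcal M}^*\omega$ (a homeomorphism of $V^*$, so the a.e.\ identity is preserved), and then decompose $H^*$ into cosets of the subgroup ${\mathcal M}^*(H^*)$. Because ${\mathcal M}^*$ is a coordinate shift and $H^*$ consists of sequences vanishing at all indices $j>0$, the image ${\mathcal M}^*(H^*)$ is exactly the subset of $H^*$ vanishing also at $j=0$, so $|H^*/{\mathcal M}^*(H^*)|=m$; a full set of representatives is obtained by letting the $j=0$ coordinate range over $D^*$, and I identify these representatives with $s_0^*,\dots,s_{m-1}^*$, matching the convention already implicit in~\eqref{wav} and~\eqref{aaa}.

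Writing out the disjoint union $H^*=\bigsqcup_{k=0}^{m-1}(s_k^*\oplus {\mathcal M}^*(H^*))$ and applying~\eqref{refinement_eq_Fourier} at each point ${\mathcal M}^*\omega\oplus s_k^*\oplus {\mathcal M}^*h^*$ turns the identity into
$$
  \sum\limits_{k=0}^{m-1}\sum\limits_{h^*\in H^*}|m_0(\omega\oplus {{\mathcal M}^*}^{-1}s_k^*\oplus h^*)|^2\,|\widehat\varphi(\omega\oplus {{\mathcal M}^*}^{-1}s_k^*\oplus h^*)|^2=1,
$$
where I use that ${{\mathcal M}^*}^{-1}$ is a group homomorphism of $(V^*,\oplus)$ (immediate from its definition as the inverse coordinate shift). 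Since $m_0$ is a Walsh polynomial and Walsh functions are $H^*$-periodic, the mask factor does not depend on $h^*$ and may be pulled outside the inner sum. A second application of Theorem~\ref{11}, now at the point $\omega\oplus{{\mathcal M}^*}^{-1}s_k^*$, collapses the inner sum to $1$, producing $\sum_{k=0}^{m-1}|m_0(\omega\oplus {{\mathcal M}^*}^{-1}s_k^*)|^2=1$ a.e.

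Finally, replacing $\omega$ by ${{\mathcal M}^*}^{-1}\omega$ and using again that ${{\mathcal M}^*}^{-1}$ distributes over $\oplus$ yields $\sum_{k=0}^{m-1}|m_0({{\mathcal M}^*}^{-1}(\omega\oplus s_k^*))|^2=1$ for a.e.\ $\omega$; continuity of $m_0$ (as a finite linear combination of continuous Walsh functions) then upgrades ``a.e.''\ to ``for all $\omega\in V^*$''. The main obstacle is bookkeeping: one must carefully verify the coset decomposition $H^*=\bigsqcup_{k=0}^{m-1}(s_k^*\oplus {\mathcal M}^*(H^*))$, the homomorphism property of the shift ${{\mathcal M}^*}^{-1}$, and the $H^*$-periodicity of the Walsh polynomial $m_0$, after which the proof reduces to the short algebraic manipulation above.
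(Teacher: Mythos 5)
Your proposal is correct and follows essentially the same route as the paper: the coset decomposition $H^*=\bigsqcup_{k}(s_k^*\oplus\mathcal{M}^*(H^*))$ is exactly the paper's substitution $h^*=q\oplus\mathcal{M}^*\gamma$ with $q\in D^*$, $\gamma\in H^*$, after which both arguments apply the Fourier-side refinement equation, pull out the mask by $H^*$-periodicity, and collapse the inner sum via Theorem~\ref{11}. Your explicit remark that continuity of the Walsh polynomial $m_0$ upgrades the a.e.\ identity to all $\omega\in V^*$ is a small point the paper leaves implicit, but it does not change the argument.
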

\begin{proof}
  Using Theorem~\ref{11}, refinement equation~(\ref{refinement_eq_Fourier}) for $\varphi$ and  $H^*$-periodicity of the mask $m_0$, setting  $h^*=q \oplus{\mathcal M}^*\gamma$, where $\gamma\in H^*$, $q\in D^*$,  we have
  $$
  1=\sum\limits_{h^*\in H^*}|\widehat\varphi(\omega \oplus h^*)|^2=
  \sum\limits_{q\in D^*}\sum\limits_{\gamma\in H^*}|\widehat\varphi(\omega \oplus q\oplus \mathcal{M}^*\gamma)|^2
  $$
  $$
  =\sum\limits_{q\in D^*}|m_0({\mathcal{M}^*}^{-1}(\omega \oplus q))|^2\sum\limits_{\gamma\in H^*}|\widehat\varphi({\mathcal{M}^*}^{-1}(\omega \oplus q)\oplus \gamma)|^2=\sum\limits_{q\in D^*}|m_0({\mathcal{M}^*}^{-1}(\omega \oplus q))|^2,
  $$
  that is equivalent to~(\ref{aaa1}).
\end{proof}
\begin{theo}
  \label{300}
  Let $\varphi\in L_2(V)$ be an orthogonal compactly supported refinable function, $\widehat{\varphi}(\theta) = 1$. Then a wavelet system 
  $\{\psi^{(\nu)}_{jk} \mid j \in \mathbb{Z}, k \in \mathbb{Z}_{+}, \nu = 1, \ldots, m-1\}$ generated by $\varphi$ is an orthonormal basis in $L_{2}(V)$.
\end{theo}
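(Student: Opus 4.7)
The plan is to combine Theorem~\ref{thm:parseval-frame} with the observation (stated immediately above that theorem) that a Parseval frame whose elements all have unit norm is automatically an orthonormal basis. The whole task then reduces to verifying $\|\psi^{(\nu)}_{jk}\|_2 = 1$ for every admissible $\nu$, $j$, $k$.

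First I would apply Theorem~\ref{thm:parseval-frame} (with $r = m-1$) to obtain the Parseval identity~(\ref{pars}). A direct change of variables using the dilation and translation invariance of $\mu$ gives $\|\psi^{(\nu)}_{jk}\|_2 = \|\psi^{(\nu)}\|_2$, so it is enough to show $\|\psi^{(\nu)}\|_2 = 1$ for each $\nu = 1, \ldots, m-1$. For this I would prove the stronger statement that $\{\psi^{(\nu)}(\cdot \oplus h) \mid h \in H\}$ is orthonormal in $L_2(V)$; taking $h = \theta$ then immediately yields $\|\psi^{(\nu)}\|_2 = 1$. By exactly the same Fourier-coefficients argument as in the proof of Theorem~\ref{11}, applied now to $\psi^{(\nu)}$ in place of $\varphi$, this orthonormality is equivalent to the periodization identity
$$
\sum_{h^* \in H^*}|\widehat{\psi^{(\nu)}}(\omega \oplus h^*)|^2 = 1 \quad \text{a.e.}
$$

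The next step is a calculation in the spirit of the preceding theorem. Writing $\widehat{\psi^{(\nu)}} = m_\nu(\mathcal{M}^{*-1}\cdot)\,\widehat\varphi(\mathcal{M}^{*-1}\cdot)$, splitting $h^* = q \oplus \mathcal{M}^*\gamma$ with $q \in D^*$ and $\gamma \in H^*$, and using $H^*$-periodicity of the Walsh polynomial $m_\nu$ together with Theorem~\ref{11} applied to the orthogonal function $\varphi$, the left-hand side collapses to
$$
\sum_{k=0}^{m-1}|m_\nu(\mathcal{M}^{*-1}(\omega \oplus s^*_k))|^2.
$$

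The main obstacle is to recognise that this last sum equals $1$, and this is precisely where the count $r = m-1$ enters. Condition~(\ref{wav}) asserts only that the $m \times m$ matrix $A(\omega) := (m_\nu(\mathcal{M}^{*-1}(\omega \oplus s^*_k)))_{0 \le k,\,\nu \le m-1}$ has orthonormal rows. However, $A(\omega)$ is square, so orthonormal rows force it to be unitary, and therefore its columns are orthonormal as well; the squared norm of the $\nu$-th column is exactly the identity we need. With $\|\psi^{(\nu)}\|_2 = 1$ established, the Parseval frame of Theorem~\ref{thm:parseval-frame} is an orthonormal basis, and the proof is complete.
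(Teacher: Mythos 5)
Your proof is correct, but it takes a genuinely different and shorter route than the paper. The paper verifies orthonormality of the whole system $\{\psi^{(\nu)}_{jk}\}$ by hand: first the same-scale inner products $\langle\psi^{(\nu)}_{0k},\psi^{(\nu')}_{0k'}\rangle=\delta_{kk'}\delta_{\nu\nu'}$ via Plancherel, periodization over $H^*$ and the identity $\sum_{q\in D^*}m_\nu\overline{m_{\nu'}}=\delta_{\nu\nu'}$, and then the cross-scale orthogonality by an MRA-type argument (each $\psi^{(\nu)}_{0k}$ is orthogonal to $V_0=\overline{\mathrm{span}\{\varphi(\cdot\oplus h)\}}$, while each $\psi^{(\nu')}_{(-j)k'}$ with $j>0$ lies in $V_0$). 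You instead reduce everything to the single computation $\|\psi^{(\nu)}_{jk}\|_2=1$ and invoke the fact that a Parseval frame of unit vectors is automatically an orthonormal basis; this eliminates the cross-scale step entirely. Note that both arguments ultimately rest on the same observation you make explicit: with $r=m-1$ the matrix $A(\omega)=\bigl(m_\nu({\mathcal M^*}^{-1}(\omega\oplus s^*_k))\bigr)_{k,\nu=0}^{m-1}$ is square, so the row orthonormality~(\ref{wav}) forces column orthonormality --- the paper uses the off-diagonal columns relation $\sum_q m_\nu\overline{m_{\nu'}}=\delta_{\nu\nu'}$, you use the diagonal one $\sum_k|m_\nu|^2=1$. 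Your approach buys brevity; the paper's buys an explicit description of where each wavelet lives relative to the scale spaces.

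One small caveat: the observation stated in the paper immediately above Theorem~\ref{thm:parseval-frame} is the weaker one that an \emph{orthonormal} Parseval frame is an orthonormal basis; the lemma you actually need (unit norms alone suffice) is not stated there and should be proved. It is a one-line argument: if $\{f_n\}$ is a Parseval frame and $\|f_n\|=1$, then
\begin{equation*}
1=\|f_n\|^2=\sum_{k}|\langle f_n,f_k\rangle|^2=\|f_n\|^4+\sum_{k\ne n}|\langle f_n,f_k\rangle|^2
=1+\sum_{k\ne n}|\langle f_n,f_k\rangle|^2,
\end{equation*}
so all cross terms vanish and the frame is orthonormal, hence an orthonormal basis by the paper's stated observation. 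With this supplied, your argument is complete.
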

\begin{proof}
  By theorem~\ref{thm:parseval-frame}, such a system $\left\{\psi_{jk}^{(\nu)}\right\}$ exists and forms a Parseval frame. So it suffices to show that the system $\left\{\psi_{jk}^{(\nu)}\right\}$ is orthonormal.
  
  First we will prove that for every $\nu, \nu'=1,\dots, m-1$ and for every $k, k'\in \mathbb Z$ 
  there holds
  $$
  \int\limits_{V} {\psi^{(\nu)}(\cdot\oplus \gamma_{[k]})} \overline{{ \psi^{(\nu')}(\cdot\oplus \gamma_{[k']})}}\ d\mu
  =\delta_{k,k'}\delta_{\nu,\nu'}.
  $$
  By Plancherel theorem, the left hand side equals
  $$
  \int\limits_{V^{*}}\widehat{\psi^{(\nu)}_{0k}}\overline{\widehat{\psi^{(\nu')}_{0k'}}}\,d\mu^{*}
  =
  \sum\limits_{\gamma\in H^*} \int\limits_{U^*\oplus \gamma}\widehat{\psi^{(\nu)}_{0k}}\overline{\widehat{\psi^{(\nu')}_{0k'}}}\,d\mu^{*}
  =
  \sum\limits_{\gamma\in H^*} \int\limits_{U^*} \widehat{\psi_{0k}^{(\nu)}}(\cdot\oplus \gamma)\overline{ \widehat{\psi_{0k'}^{(\nu')}}(\cdot\oplus \gamma)}\, d\mu^*.
  $$

  So it suffices to verify that
  \begin{equation}
    \label{333}
    \int\limits_{U^*}\sum\limits_{\gamma\in H^*} \widehat{\psi_{0k}^{(\nu)}}(\cdot\oplus \gamma)\overline{ \widehat{\psi_{0k'}^{(\nu')}}(\cdot\oplus \gamma)}\, d\mu^*=\delta_{k,k'}\delta_{\nu,\nu'} .
  \end{equation}
  Using formula~(\ref{Fourier_shift}) and $H^{*}$-periodicity of Walsh polynomials we have
  $$
  S:=\sum\limits_{\gamma\in H^*} \widehat{\psi_{0k}^{(\nu)}}(\cdot\oplus \gamma)\overline{ \widehat{\psi_{0k'}^{(\nu')}}(\cdot\oplus \gamma)}
  =
  \sum\limits_{\gamma\in H^*}\widehat{\psi^{(\nu)}}(\cdot \oplus \gamma)W_{k}\overline{\widehat{\psi^{(\nu')}}(\cdot \oplus \gamma)W_{k'}}.
  $$
  By the definition of wavelet functions, setting  $\gamma=q \oplus{\mathcal M}^*h$, where $h\in H^*$, $q\in D^*$, and using $H^*$-periodicity of $m_\nu$, we have
  \begin{align*}
    &S=\sum\limits_{\gamma\in H^*} m_\nu({ {\mathcal M}^*}^{-1}(\cdot\oplus\gamma))\widehat\varphi({ {\mathcal M}^*}^{-1}(\cdot\oplus \gamma))
      \overline{m_{\nu'}({ {\mathcal M}^*}^{-1}(\cdot\oplus\gamma))\widehat\varphi({ {\mathcal M}^*}^{-1}(\cdot\oplus\gamma)} W_k\overline{ W_{k'}}
    \\
    &= \sum\limits_{q\in D^*} m_\nu({ {\mathcal M}^*}^{-1}(\cdot\oplus q))\overline{m_{\nu'}({ {\mathcal M}^*}^{-1}(\cdot \oplus q))} )W_k\overline{ W_{k'}} \sum\limits_{h\in H^*}|\widehat\varphi({ {\mathcal M}^*}^{-1 }(\cdot\oplus q)\oplus h)|^2.
  \end{align*}
  Due to orthogonality of $\varphi$,  Theorem~\ref{11}  and (\ref{wav}), this yields
  $$
  S=\sum\limits_{q\in D^*} m_\nu({{\mathcal M}^*}^{-1}(\cdot\oplus q))\overline{m_{\nu'}
	({{\mathcal M}^*}^{-1}(\cdot \oplus q))}W_k\overline{ W_{k'}}
  =W_k\overline{ W_{k'}}\delta_{\nu,\nu'}
  $$
  which together with orthonormality of Walsh functions on $U^*$ implies~(\ref{333}). 

  It remains to check that the functions $\psi^{(\nu)}_{jk}$, $\psi^{(\nu')}_{j'k'}$ are orthogonal for all 
  $\nu, \nu', k, k'$ whenever $j\ne j'$. First, let us  prove that any function $\psi^{(\nu)}_{0k}$ is orthogonal 
  to the space $V_0:=\overline{\mathrm{span}\{\varphi(\cdot\oplus h)\mid  h\in H\}}$.  Again setting $\gamma=q \oplus{\mathcal M}^*h$, where $h\in H^*$, $q\in D^*$, and using $H^*$-periodicity of $m_\nu$, we have
  \begin{align*}
    &\int\limits_V \psi^{(\nu)}(\cdot\oplus \gamma_{[k]})\overline{\varphi(\cdot\oplus \gamma_{[k']})} d\mu=
      \int\limits_{V^*} \widehat{\psi^{(\nu)}}W_{k}\overline{\widehat\varphi W_{k'}}d\mu^*
    \\
    &=\sum\limits_{\gamma\in H^*} \int\limits_{U^*\oplus \gamma}m_\nu({{\mathcal M}^*}^{-1}\cdot) W_{k}\overline{
      m_0({{\mathcal M}^*}^{-1}\cdot)W_{k'}}|\widehat\varphi({{\mathcal M}^*}^{-1}\cdot)|^2 d\mu^*
    \\
    &=\int\limits_{U^*}W_{k}\overline{W_{k'}}\sum_{q\in D^{*}}m_\nu({{\mathcal M}^*}^{-1}(\cdot\oplus q))
      \overline{	m_0({{\mathcal M}^*}^{-1}(\cdot\oplus q))} K_{q} \ d\mu^*,
  \end{align*}
  where
  $$
  K_{q}(\omega)=\sum_{h\in H^*}\big|\widehat\varphi({{\mathcal M}^*}^{-1}(\omega\oplus q)\oplus h)\big|^2.
  $$
  But $|K_{q}(\omega)| = 1$ for almost all $\omega$ because of orthogonality of $\varphi$ and Theorem~\ref{11}, 
  which yields
  \begin{align*}
    &\left|\int\limits_V \psi^{(\nu)}(\cdot\oplus \gamma_{[k]})\overline{\varphi(\cdot\oplus \gamma_{[k']})} d\mu\right|
      \leq\int\limits_{U^*}\Bigg|\sum_{q \in D^{*}}m_\nu({{\mathcal M}^*}^{-1}(\cdot\oplus q))
      \overline{	m_0({{\mathcal M}^*}^{-1}(\cdot\oplus q))} \Bigg| \ d\mu^*,
  \end{align*}
  and the latter integral is equal to zero due to (\ref{wav}). 

  So, we proved that $\psi^{(\nu)}_{0k}$ is orthogonal to  $V_0$. Let us show that any function 
  $\psi^{(\nu')}_{{-j}k'}$, $j>0$, belongs to $V_0$. Since, obviously,
  $$
  \psi^{(\nu')}(x)=\sum_{k=0}^\infty m\beta_k \varphi({\mathcal M}x \oplus \gamma_{[k]})
  $$
  where $\beta_k$ are the coefficients of $m_{\nu'}$ with respect to system $\left\{W_{k}\right\}_{k\in \mathbb{Z}_+}$, we have 
  $$
  \psi^{(\nu')}({\mathcal M}^{-j}x\oplus \gamma_{[k']})=\sum_{k=0}^\infty m\beta_k \varphi({\mathcal M}^{1-j}x\oplus{\mathcal M}\gamma_{[k']} \oplus \gamma_{[k]}),
  $$
  which yields $\psi^{(\nu')}_{(-j)k'}\in V_0$ by~(\ref{refinable}) and definition of $V_{0}$. Thus, any two functions $\psi^{(\nu)}_{0k}$ and $\psi^{(\nu')}_{(-j)k'}$ 
  are orthogonal. In a similar way, any two functions $\psi^{(\nu)}_{jk}$ and $\psi^{(\nu')}_{j'k'}$ with $j\ne j'$ are orthogonal, which was to be proved.
\end{proof}

Now we are interested in a method for construction of compactly supported orthogonal refinable functions.
It follows from the proof of Theorem~4 in~\cite{BS} that every Walsh polynomial $m_0$, satisfying~(\ref{aaa}) and such that $m_0(\theta)=1$, is the mask of a compactly supported refinable function $\varphi$ whose Fourier transform is defined by
\begin{equation}
  \widehat\varphi=\prod_{j=1}^\infty m_0({{\mathcal M}^*}^{-j}\cdot).
  \label{prod}
\end{equation}

So, we know how to construct a compactly supported refinable functions $\varphi$  on the basis of any appropriate Walsh polynomial $m_0$. Let us discuss how to provide its orthogonality. 

It will be convenient for us to write an element $\omega=\{\omega_j\}_{j=-N}^\infty$ from $ V^*$ in the following form
$$
\omega=\omega_{-N}\omega_{-N+1}\dots \omega_0\mbox{\boldmath $,$\,}  \omega_1 \omega_2 \omega_2 \dots,  \quad \omega_j\in D^*.
$$
In the case, where  $\omega_j=\mathbf{0}$ for all $j>n$,  we will write
$$
\omega=\omega_{-N}\omega_{-N+1}\dots \omega_0\mbox{\boldmath $,$\,} \omega_1\omega_2\dots \omega_n,
$$
and in the case, where $\omega_j=\mathbf{0}$ for all $j>0$ (i.e., $\omega\in H^*$),  we will write
$$
\omega=\omega_{-N}\omega_{-N+1}\dots \omega_0.
$$

Denote by $\mathcal{M}_0^{(n)}$ the set of Walsh polynomials of order $n$ such that $m_0(\theta)=1$.
Since the 
Walsh functions $W_k$ are $H^*$-periodic,  constant on each set $U^*_{n,s}$,  $s \in\mathbb{Z}_+$, $0 \leq k \leq m^{n}-1$, 
for every $m_0\in \mathcal{M}_0^{(n)}$ and for every $\omega\in V^*$, we have 
$$
m_0(\omega)= m_0(\mathbf{0}\mbox{\boldmath $,$\,}  \omega_1\ldots \omega_n)=:b(\omega_1\ldots \omega_n).
$$
In particular, it follows that  only a finite number of the factors in  product~(\ref{prod}) is not  identical~1.  
Also, it is clear that to define $m_0$ on $V^*$, one needs  to determine all values $b(\omega_1\ldots \omega_n)$.

To construct $m_0\in \mathcal{M}_0^{(n)}$, $n\geq 2$, that is the mask of a compactly supported orthogonal refinable function $\varphi$, we  choose a positive integer $r\ge n$ and a set
\begin{equation}
  \label{301}
  \{\xi_0,\dots, \xi_{r}\}\subset D^*\setminus\{s^*_0\},
\end{equation}
such that
\begin{equation}
  \label{302}
  (\xi_k,\dots, \xi_{k+n-2})\ne (\xi_{k'},\dots, \xi_{k'+n-2}) \quad \forall k\ne k'.
\end{equation} 
It is clear that the maximum possible $r$ depends on $n$ and $m$. For example, if $n=m=3$, then the  maximal $r$   equals 4, and if $m=3$, $n=4$, then one can choose $r=9$.

Set 
$$
b(\xi_k\dots \xi_{k+n-1})=\pm 1
$$
for every $k=0,\dots, r-n + 1$; if $(\omega_2,\dots ,\omega_{n})$ does not coincide with
$(\xi_k,\dots,  \xi_{k+n-2})$ for some $k=1,\dots, r-n+2$,  
we set
$$
b(\mathbf{0} \omega_2\dots \omega_{n})=\pm 1, \quad b(\mathbf{0}\ldots \mathbf{0}) = 1,
$$
and for all other sets $(\omega_1, \dots, \omega_{n})$ we vanish the corresponding $b(\omega_1 \dots\omega_{n})$. 
So, we defined a Walsh polynomial $m_0\in \mathcal{M}_0^{(n)}$. Obviously, we have 
\begin{equation}
  \sum\limits_{k=0}^{m-1}\Big|m_0({\bf 0}\mbox{\boldmath $,$\,} s_k^*\omega_2\dots\omega_n)\Big|^2 =1 \quad\forall (\omega_2,\dots, \omega_n),\ \omega_j\in D^*,
  \label{33.20}
\end{equation}
that is equivalent to equality~(\ref{aaa1}). Hence, $m_0$ is the mask of a  
compactly supported refinable function~$\varphi$, defined by~(\ref{prod}). 
We will say that such a function $\varphi$ and its mask $m_0$ are {\it associated with the set} $\{\xi_0,\dots, \xi_{r}\}$. 

\begin{theo}
  Let $n, r\in \mathbb N$, $r\geq n\geq 2$. If $\varphi$ is the refinable function associated with 
  a set of digits $\{\xi_0,\dots, \xi_{r}\}$, 
  satisfying~(\ref{301}), (\ref{302}), then $\varphi$ is orthogonal, $\varphi \in \mathcal{S}^{(n-1)}_{r-n+2}$, and
  \begin{equation}
    \begin{split}
      \widehat{\varphi}(\omega)
      &=
        \sum\limits_{k=1}^{r-n+2}\mathbf{1}_{\xi_{0}\ldots\xi_{k-1}\mbox{\boldmath $,$\,}\xi_{k}\ldots\xi_{k+n-2}\oplus U^{*}_{n-1}}(\omega)\prod\limits_{j=1}^{k+n-1}b(\xi_{k-j} \ldots\xi_{k+n-1-j})
      \\
      +
      &\mathbf{1}_{Q\oplus U^{*}_{n-1}}(\omega)\prod\limits_{j=1}^{n-1}b(\mathbf{0} \ldots \mathbf{0} \omega_{1} \ldots \omega_{n-j}),
    \end{split}
    \label{eq:phihat_formula}
  \end{equation}
  where $\xi_{l} = \mathbf{0}$ for $l \leq -1$,
  $$
  Q = \left\{\mathbf{0}\mbox{\boldmath $,$\,} \alpha_{1}\ldots \alpha_{n-1} \mid \alpha_{1} \ldots \alpha_{n-1} \neq \xi_{k} \ldots \xi_{k+n-2} \quad \forall k = 1, 2, \ldots, r-n+2\right\}.
  $$
\end{theo}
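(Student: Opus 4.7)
The plan is to first establish the explicit formula~(\ref{eq:phihat_formula}), then to read off $\varphi\in\mathcal{S}_{r-n+2}^{(n-1)}(V)$ from it, and finally to deduce orthogonality from Theorem~\ref{11}. The whole argument rests on unfolding $\widehat\varphi(\omega)=\prod_{j=1}^{\infty}m_0({\mathcal M^*}^{-j}\omega)=\prod_{j=1}^{\infty}b(\omega_{1-j}\omega_{2-j}\ldots\omega_{n-j})$ and exploiting the sharp vanishing structure of $b$.

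For the formula, each factor $b(\omega_{1-j}\ldots\omega_{n-j})$ is non-zero only in two scenarios dictated by the construction: the length-$n$ window is a $\xi$-pattern $(\xi_{k'},\ldots,\xi_{k'+n-1})$ with $k'\in\{0,\ldots,r-n+1\}$, or its leading entry is $\mathbf{0}$ and the tail $(\omega_{2-j},\ldots,\omega_{n-j})$ is not a forbidden sub-pattern $(\xi_{k'},\ldots,\xi_{k'+n-2})$ with $k'\in\{1,\ldots,r-n+2\}$. Two consecutive windows (at $j$ and $j+1$) overlap in $n-1$ positions, and by~(\ref{302}) any such overlap uniquely identifies the $\xi$-pattern index it belongs to. This forces a backward propagation: once a window matches $\xi$-pattern $k'$, the next window to the left must match $\xi$-pattern $k'-1$ (if $k'\ge 1$) or have leading $\mathbf{0}$; and by~(\ref{301}), $\xi_{k'}\ne\mathbf{0}$, so once a leading $\mathbf{0}$ enters the picture the $\xi$-chain cannot restart. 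Peeling off the windows from $j=1$ upward, the only configurations of $\omega$ giving a non-zero product are: (i) a $\xi$-chain with $\omega_{-(k-1)+i}=\xi_i$ for $0\le i\le k-1$ and $\omega_i=\xi_{k-1+i}$ for $1\le i\le n-1$ (with zeros for indices below $-(k-1)$ and free digits for indices $\ge n$), parameterized by $k\in\{1,\ldots,r-n+2\}$, yielding the sum in~(\ref{eq:phihat_formula}); or (ii) $\omega_j=\mathbf{0}$ for all $j\le 0$ with $(\omega_1,\ldots,\omega_{n-1})\in Q$, yielding the $Q$-term. Listing the windows in each case reproduces exactly the displayed factors.

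With the formula in hand, $\widehat\varphi$ manifestly depends on $\omega$ only through positions $-(r-n+1),\ldots,n-1$ and vanishes unless $\omega_j=\mathbf{0}$ for every $j\le-(r-n+2)$, so $\widehat\varphi\in\mathcal{S}^{(r-n+2)}_{n-1}(V^*)$, and Theorem~\ref{p8.2} yields $\varphi\in\mathcal{S}^{(n-1)}_{r-n+2}(V)$. For orthogonality I would invoke Theorem~\ref{11} and show $F(\omega):=\sum_{h^*\in H^*}|\widehat\varphi(\omega\oplus h^*)|^2\equiv 1$ a.e. As $h^*$ ranges over $H^*$, the positive part of $\omega\oplus h^*$ stays equal to that of $\omega$ while the non-positive part runs through $H^*$ once. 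The explicit formula then shows that for each fixed positive part there is exactly one choice of non-positive part producing a non-zero $\widehat\varphi$: if $(\omega_1,\ldots,\omega_{n-1})\in Q$ it must be identically $\mathbf{0}$, while if $(\omega_1,\ldots,\omega_{n-1})=(\xi_k,\ldots,\xi_{k+n-2})$ for the (unique, by~(\ref{302})) $k\in\{1,\ldots,r-n+2\}$ it must equal $\xi_0\xi_1\ldots\xi_{k-1}$ at positions $-(k-1),\ldots,0$. In either case $|\widehat\varphi|^2$ is a product of $\pm 1$'s of modulus $1$, so $F\equiv 1$ and Theorem~\ref{11} delivers the claim.

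The main obstacle will be the first step: rigorously verifying that the backward propagation admits only the two advertised families of $\omega$. The cleanest approach looks like reverse induction on $j$, peeling off one window at a time and invoking~(\ref{302}) (to rule out branching between distinct $\xi$-patterns) and~(\ref{301}) (to prevent the chain from restarting after a leading zero). Once the combinatorial structure is pinned down, the remaining bookkeeping---that the product of the surviving $b$-values matches the factors displayed in~(\ref{eq:phihat_formula})---is routine.
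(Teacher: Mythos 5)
Your proposal is correct and takes essentially the same route as the paper: both unfold the infinite product~(\ref{prod}) into windows $b(\omega_{1-j}\ldots\omega_{n-j})$, use the prescribed vanishing pattern of $b$ together with~(\ref{302}) and~(\ref{301}) to force the non-positive digits digit by digit (yielding the unique shift $h=\xi_0\ldots\xi_{k-1}$ or $h=\theta$ of modulus-one value), and then conclude via Theorems~\ref{11} and~\ref{p8.2}. The only difference is ordering --- you derive formula~(\ref{eq:phihat_formula}) first and read off orthogonality from it, while the paper establishes the unique non-vanishing shift first and records the formula as a byproduct --- which is immaterial.
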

\begin{proof}
  Due to Theorem~\ref{11}, to prove the orthogonality of $\varphi$, it suffices to verify that for every $\omega\in V^*$  there exists a unique $h\in H^*$  such that 
  $\widehat\varphi(\omega \oplus h)\ne 0$, and moreover $|\widehat\varphi(\omega \oplus h)|=1$. Obviously, this is equivalent to the following statement: for every $\omega\in U^*$ there exists a unique $h\in H^*$ such that 
  $\widehat\varphi(\omega \oplus h)\ne 0$, and moreover $|\widehat\varphi(\omega \oplus h)|=1$. 

  Let $h= h_{-N}\dots h_0$. 
  Suppose that for some $k=1,\dots, r-n+2$ we have
  $$
  \omega= \mathbf{0}\mbox{\boldmath $,$\,} \xi_k\dots \xi_{k+n-2}\omega_{n}\omega_{n+1}\dots,
  $$
  and $\widehat{\varphi}(\omega \oplus h) \neq 0$. By formula~(\ref{prod})
  \begin{align*}
    &\widehat{\varphi}(\omega \oplus h)
      =
      \prod\limits_{j = 1}^{\infty}m_{0}\left(\mathcal{M}^{-j}(\omega \oplus h)\right)
    \\
    &=
      b(h_{0}\xi_{k} \ldots \xi_{k + n-2})b(h_{-1}h_{0} \xi_{k} \ldots \xi_{k + n - 3})\cdot \ldots \cdot b(h_{-j} \ldots h_{0}\xi_{k} \ldots \xi_{k+n-j-2}) \cdot \ldots .
  \end{align*}
  The factor $b(h_0 \xi_k\dots \xi_{k+n-2})$ is nonzero only if 
  $h_0=\xi_{k-1}$. Similarly, if $1\leq j\le k-1$ and $h_0=\xi_{k-1}, h_{-1}=\xi_{k-2}, \dots, h_{-j+1}=\xi_{k-j}$, then 
  $$
  b(h_{-j}h_{-j+1}\dots h_0 \xi_k\dots \xi_{k+n-j-2})=
  b(h_{-j}\xi_{k-j}\dots \xi_{k+n-j-2})
  $$
  is nonzero only if $h_{-j}=\xi_{k-j-1}$. Hence, $h_{-j+1}=\xi_{k-j}$ for $j=1,\dots, k$.

  Since $(\xi_0,\dots, \xi_{n-2}) \neq (\xi_k,\dots, \xi_{k+n-2})$  for any $k=1,\dots, r-n+2$, we have
  $$
  b(h_{-k}h_{-k+1} \ldots)=b(h_{-k} \xi_0\dots \xi_{n-2}),
  $$ 
  that is nonzero only if $h_{-k}=\mathbf{0}$. Then
  $$
  b(h_{-k-1}h_{-k} \ldots)=b(h_{-k-1}\mathbf{0} \xi_0\dots \xi_{n-3})
  $$ 
  is nonzero only if $h_{-k-1}=\mathbf{0}$.  Similarly, we conclude that only if $h_j=\mathbf{0}$ for all $j=-N, \ldots, -k-2$, 
  we can provide nonzero $\widehat\varphi(\omega\oplus h)$.  Thus, there exists a unique vector $h=\xi_0\dots \xi_{k-1}$ such that
  $\widehat\varphi(\omega\oplus h)\ne 0$, and moreover $|\widehat\varphi(\omega\oplus h)|=1$.

  We can write for $k = 1, \ldots, r - n + 2$ that
  \begin{equation}
    \label{eq:restriction_not_Q}
    \begin{split}
      &\widehat{\varphi}|_{H^{*} \oplus \mathbf{0}\mbox{\boldmath $,$\,}\xi_{k} \ldots \xi_{k+n-2} \oplus U^{*}_{n-1}}
      \\
      &=
        \mathbf{1}_{\xi_{0} \ldots \xi_{k-1}\mbox{\boldmath $,$\,} \xi_{k} \ldots \xi_{k+n-2} \oplus U^{*}_{n-1}}
        \prod\limits_{j=1}^{k+n-1}b(\xi_{k-j} \ldots \xi_{k+n-1-j}),
    \end{split}
  \end{equation}
  where $\xi_{l} = \mathbf{0}$ for $l \leq -1$.

  Now we assume that $\omega \in Q \oplus U^{*}_{n-1}$. Then the factor
  $b(h_{0} \omega_1\dots \omega_{n-1})$ 
  is nonzero only if $h_0=\mathbf{0}$. By the same arguments, for any $j\in\{1,\dots, n-2\}$, the factor
  $$
  b(h_{-j}h_{-j+1} \ldots) = b(h_{-j}\mathbf{0}\dots \mathbf{0}  \omega_1\dots \omega_{n-j-1})
  $$ 
  is nonzero only if $h_{-j}=\mathbf{0}$. Similarly, all other digits of $h$ must be zero to provide
  nonzero $\widehat\varphi(\omega\oplus h)$.  Hence, there exists a unique vector $h = \theta$ such that 
  $\widehat\varphi(\omega\oplus h)\ne 0$, and moreover $|\widehat\varphi(\omega\oplus h)|=1$.

  We can write
  \begin{equation}
    \label{eq:restriction_Q}
    \widehat{\varphi}|_{H^{*}\oplus Q \oplus U^{*}_{n-1}}
    = \mathbf{1}_{Q\oplus U^{*}_{n-1}}\prod\limits_{j=1}^{n-1}b(\mathbf{0} \ldots \mathbf{0} \omega_{1} \ldots \omega_{n-j}).
  \end{equation}

  Relations~(\ref{eq:restriction_not_Q}) and~(\ref{eq:restriction_Q}) lead to formula~(\ref{eq:phihat_formula}), which implies that $\widehat{\varphi} \in \mathcal{S}^{(r-n+2)}_{n-1}$. By theorem~\ref{p8.2} the belonging $\varphi \in \mathcal{S}^{(n-1)}_{r-n+2}$ holds true.  
\end{proof}

Now we know how to construct  a compactly supported orthogonal refinable function $\varphi$ with a mask $m_0\in \mathcal{M}_0^{(n)}$. Let us discuss the construction of wavelet systems generated by $\varphi$. 

For every fixed $(\omega_2,\dots,\omega_n)$, we denote $m_0({\bf 0}\mbox{\boldmath $,$\,} s_k^*\omega_2\dots\omega_n)$ by $\alpha_{0 k}$  
and find  numbers $\alpha_{\nu k}$, $\nu=1, \dots, m-1$, $ k = 0,\dots, m-1$, such that the matrix  $(\alpha_{\nu k})_{\nu,k=0}^{m-1}$ is unitary. Note that this is possible due to~(\ref{33.20})  and the Householder transform (see, e.g., \cite{NPS}, p. 327). 
Setting 
$$
m_\nu ({\bf 0}\mbox{\boldmath $,$\,} s_k^*\omega_2\dots\omega_n):= \alpha_{\nu k},
$$
we get wavelet masks $m_\nu$ which are Walsh polynomials of order $n$, and wavelet functions defined by
$$
\widehat {\psi^{(\nu)}}= m_\nu({\mathcal M^*}^{-1}\cdot)\widehat \varphi({\mathcal M^*}^{-1}\cdot),\quad \nu=1,\dots, m-1. 
$$
The corresponding wavelet system $\{\psi^{(\nu)}_{jk}\}_{j,k,\nu}$ is generated by $\varphi$  (see~\cite[Theorem~5]{BS} for details). Due to Theorem~\ref{300}, such a system $\{\psi^{(\nu)}_{jk}\}_{j,k,\nu}$ forms an orthonormal wavelet basis.

\bigskip
\textbf{Example 1.} Let $ M=   \left( {\begin{array}{cc}
  2 & 0 \\
  1 & 2\\
\end{array} } \right)$, $n=3$. Then we have $m=4$,  $ M^*=   \left( {\begin{array}{cc}
  2 & 1 \\
  0 & 2\\
\end{array} } \right)$, and the set of vectors 
$$
s_0^*={\bf 0}=(0,0), s_1^*=(0,1), s_2^*=(1,0), s_3^*=(1,1)
$$
can be taken as  $D^*$. To construct $m_0\in \mathcal{M}_0^{(3)}$ we choose
$$
\xi_0=s_1^*,\ \xi_1=s_1^*,\  \xi_2=s_2^*,\  \xi_3=s_1^*
$$
and, according to our method of construction, $m_0({\bf 0}\mbox{\boldmath $,$\,}s_1^* s_1^* s_2^*)=1, m_0({\bf 0}\mbox{\boldmath $,$\,} s_1^* s_2^* s_1^*)=~1$,  and 
$m_0({\bf 0}\mbox{\boldmath $,$\,}s_l^* s_1^* s_2^*)=m_0({\bf 0}\mbox{\boldmath $,$\,} s_l^* s_2^* s_1^*)=0$ whenever $l=0,2,3$. Similarly, if 
$(\omega_1,  \omega_2)$ is different from $(s_1^*, s_2^*)$ and from $(s_2^*, s_1^*)$, then 
$m_0({\bf 0}\mbox{\boldmath $,$\,} s_l^* \omega_1 \omega_2)=0$ for $l=1,2,3$ and $m_0({\bf 0}\mbox{\boldmath $,$\,} \mathbf{0} \omega_1 \omega_2)=1$.

By formula~(\ref{eq:phihat_formula}) the refinable function $\varphi$ is given by its Fourier transform
$$
\widehat\varphi
=\mathbf{1}_{s_{1}^{*}\mbox{\boldmath $,$\,}s_{1}^{*}s_{2}^{*} \oplus U^{*}_{2}}
+\mathbf{1}_{s_{1}^{*}s_{1}^{*}\mbox{\boldmath $,$\,}s_{2}^{*}s_{1}^{*} \oplus U^{*}_{2}}
+\mathbf{1}_{Q \oplus U^{*}_{2}},
$$
where 
$Q=\left\{\mathbf{0}\mbox{\boldmath $,$\,}\alpha_{1}\alpha_{2} \mid \alpha_{1}\alpha_{2} \not\in \left\{s_{1}^{*}s_{2}^{*}, s_{2}^{*}s_{1}^{*}\right\}\right\}$. Notice also, that $\widehat\varphi, \varphi \in \mathcal{S}_{2}^{(2)}$.

To get wavelet functions, it remains to determine wavelet masks $m_\nu$. Obviously, to provide~(\ref{wav}),
we can set $m_\nu(\mathbf{0}\mbox{\boldmath $,$\,} s^*_l\omega_1\omega_2)=\delta_{l\nu}$, $\nu=1,2,3$, whenever
$(\omega_1,  \omega_2)$ is different from $(s_1^*, s_2^*)$ and from $(s_2^*, s_1^*)$;
\begin{align*}
  &m_1({\bf 0}\mbox{\boldmath $,$\,} s_0^* s_2^* s_1^*)=m_1({\bf 0}\mbox{\boldmath $,$\,} s_0^* s_1^* s_2^*)=1;
    \  m_1({\bf 0}\mbox{\boldmath $,$\,} s_l^* s_2^* s_1^*)=m_1({\bf 0}\mbox{\boldmath $,$\,} s_l^* s_1^* s_2^*)=0,\  l=1,2,3;
  \\
  &m_\nu({\bf 0}\mbox{\boldmath $,$\,} s_l^* s_2^* s_1^*)=m_\nu({\bf 0}\mbox{\boldmath $,$\,} s_l^* s_1^* s_2^*)=\delta_{\nu l}, \nu=2,3, \ l=0,1,2,3.
\end{align*}

\textbf{Example 2.} Let $M = \begin{pmatrix}
  2 & 1\\
  -1 & 1
\end{pmatrix}$, $n = 3$. Then $m = 3$, $M^{*} = \begin{pmatrix}
  2 & -1\\
  1 & 1
\end{pmatrix}$, and, for instance, $s_{0}^{*} = (0, 0), s_{1}^{*}= (0, 1), s_{2}^{*} = (1, 1)$. Choose $r = 4$,
$$
(\xi_{0}, \xi_{1}, \xi_{2}, \xi_{3}, \xi_{4}) = (s_{1}^{*}, s_{2}^{*}, s_{2}^{*}, s_{1}^{*}, s_{1}^{*}).
$$
We define the mask $m_{0}$ on $U^{*}$ by its values on a set $\left\{\mathbf{0}\mbox{\boldmath $,$\,}\omega_{1}\omega_{2}\omega_{3} \mid \omega_{j} \in D^{*}\right\}$ as
$$
m_{0}(\mathbf{0}\mbox{\boldmath $,$\,}\omega_{1}\omega_{2}\omega_{3})
= \begin{cases}
  \left.
  \begin{cases}
    1,	&\text{if } \omega_{2}\omega_{3} \not\in \left\{s_{2}^{*}s_{2}^{*}, s_{2}^{*}s_{1}^{*}, s_{1}^{*}s_{1}^{*}\right\}\\
    0,	&\text{otherwise} \\
  \end{cases}\right\},	&\text{if } \omega_{1} = \mathbf{0},\\
  \left.
  \begin{cases}
    1,	&\text{if } \omega_{1}\omega_{2}\omega_{3} \in \left\{s_{1}^{*}s_{2}^{*}s_{2}^{*}, s_{2}^{*}s_{2}^{*}s_{1}^{*}, s_{2}^{*}s_{1}^{*}s_{1}^{*}\right\}\\
    0,	&\text{otherwise} \\
  \end{cases}\right\},	&\text{if } \omega_{1} \neq \mathbf{0}.\\
\end{cases}
$$

By formula~(\ref{eq:phihat_formula}) the refinable function $\varphi$ is given by its Fourier transform
$$
\widehat{\varphi}
= \mathbf{1}_{s_{1}^{*}\mbox{\boldmath $,$\,}s_{2}^{*}s_{2}^{*}\oplus U^{*}_{2}}
+ \mathbf{1}_{s_{1}^{*}s_{2}^{*}\mbox{\boldmath $,$\,}s_{2}^{*}s_{1}^{*}\oplus U^{*}_{2}}
+ \mathbf{1}_{s_{1}^{*}s_{2}^{*}s_{2}^{*}\mbox{\boldmath $,$\,}s_{1}^{*}s_{1}^{*}\oplus U^{*}_{2}}
+ \mathbf{1}_{Q \oplus U^{*}_{2}},
$$
where $Q = \left\{\mathbf{0}\mbox{\boldmath $,$\,}\alpha_{1}\alpha_{2} \mid \alpha_{1}\alpha_{2} \not\in \left\{s_{2}^{*}s_{2}^{*}, s_{2}^{*}s_{1}^{*}, s_{1}s_{1}^{*}\right\}\right\}$.

If $\omega_{2}\omega_{3} \not\in \left\{s_{2}^{*}s_{2}^{*}, s_{2}^{*}s_{1}^{*}, s_{1}^{*}s_{1}^{*}\right\}$, define
$$
m_{\nu}(\mathbf{0}\mbox{\boldmath $,$\,}s_{l}^{*}\omega_{2}\omega_{3}) = \delta_{\nu l}, \quad \nu = 1, 2, \quad l = 0, 1, 2.
$$
When $\omega_{2}\omega_{3} \in \left\{s_{2}^{*}s_{2}^{*}, s_{2}^{*}s_{1}^{*}, s_{1}^{*}s_{1}^{*}\right\}$, we take the values of $m_{\nu}$ from the following tables
$$
\begin{array}{r|ccc}
  \hline
  \omega_{1} & s_{0}^{*} & s_{1}^{*} & s_{2}^{*}\\
  \hline
  m_{0}(\mathbf{0}\mbox{\boldmath $,$\,}\omega_{1}s_{2}^{*}s_{2}^{*}) & 0 & 1 & 0\\
  \hline
  m_{1}(\mathbf{0}\mbox{\boldmath $,$\,}\omega_{1}s_{2}^{*}s_{2}^{*}) & 1 & 0 & 0\\
  \hline
  m_{2}(\mathbf{0}\mbox{\boldmath $,$\,}\omega_{1}s_{2}^{*}s_{2}^{*}) & 0 & 0 & 1\\
  \hline
\end{array}
\quad
\begin{array}{r|ccc}
  \hline
  \omega_{1} & s_{0}^{*} & s_{1}^{*} & s_{2}^{*}\\
  \hline
  m_{0}(\mathbf{0}\mbox{\boldmath $,$\,}\omega_{1}s_{2}^{*}s_{1}^{*}) & 0 & 0 & 1\\
  \hline
  m_{1}(\mathbf{0}\mbox{\boldmath $,$\,}\omega_{1}s_{2}^{*}s_{1}^{*}) & 0 & 1 & 0\\
  \hline
  m_{2}(\mathbf{0}\mbox{\boldmath $,$\,}\omega_{1}s_{2}^{*}s_{1}^{*}) & 1 & 0 & 0\\
  \hline
\end{array}
$$
$$
\begin{array}{r|ccc}
  \hline
  \omega_{1} & s_{0}^{*} & s_{1}^{*} & s_{2}^{*}\\
  \hline
  m_{0}(\mathbf{0}\mbox{\boldmath $,$\,}\omega_{1}s_{1}^{*}s_{1}^{*}) & 0 & 0 & 1\\
  \hline
  m_{1}(\mathbf{0}\mbox{\boldmath $,$\,}\omega_{1}s_{1}^{*}s_{1}^{*}) & 0 & 1 & 0\\
  \hline
  m_{2}(\mathbf{0}\mbox{\boldmath $,$\,}\omega_{1}s_{1}^{*}s_{1}^{*}) & 1 & 0 & 0\\
  \hline
\end{array}
$$
The condition~(\ref{wav}) is satisfied, thus the relations $\widehat{\psi^{(\nu)}}(\omega) = m_{\nu}(\mathcal{M}^{*-1}\omega)\widehat{\varphi}(\mathcal{M}^{*-1}\omega)$, $\nu = 1, 2$, define wavelet functions.

\bigskip

{\bf M. Babushkin}

St. Petersburg State University;

ITMO University

E-mail: m.v.babushkin@yandex.ru

\smallskip
{\bf M. Skopina}

St. Petersburg State University;

Higher School of Economics, St. Petersburg 

Regional Mathematical Center of Southern Federal University

E-mail: skopinama@gmail.com


\begin{thebibliography}{99}
  
\bibitem{AVDR}
  
  G. H. Agaev, N. Ya. Vilenkin, G. M. Dzhafarli and A. I. Rubinstein, Multiplicative
  3 Systems of Functions and Harmonic Analysis on 0-Dimensional Groups  (in Russian), ELM, Baku,  
  180~p. (1981).
  
\bibitem{SWS}
  Schipp F.,   Wade W.~R., Simon, P.
  \emph{Walsh Series: An Introduction to Dyadic Harmonic Analysis},  New York: Adam Hilger (1990).
   
\bibitem{GES}
  Golubov B., Efimov A., Skvortsov V.
  \emph{Walsh series and transforms. Theory and applications}, Transl. from the Russian by W. R. Wade.
  Mathematics and Its Applications. Soviet Series. 64. Dordrecht etc.: Kluwer Academic Publishers Group. xiii, 368 p. (1991).
  
\bibitem{F05}
  Yu. A. Farkov, Orthogonal wavelets with compact support on locally compact Abelian   groups, Izvestiya: Mathematics 69, N 3, 623–650 (2005).
  
\bibitem{NPS}
  Novikov I.~Ya, Protassov V.~Yu., Skopina M.~A. Wavelet Theory. Translations of Mathematical Monographs 239. Providence, RI: American Mathematical Society (AMS) (2011).
  
  
\bibitem{FMS19}
  Farkov Yu.\,A., Manchanda P.,  Siddiqi A.\,H. 
  \emph{Construction of Wavelets through Walsh Functions}, Industrial and Applied Mathematics. Singapore: Springer (2019).  
  
\bibitem{FLS}
  Yu.~A.	Farkov,  E.~A. Lebedeva,  M.~A. Skopina,
  Wavelet frames on Vilenkin groups and their approximation properties. Intern. J. Wavelets Multiresolut. Inf. Process. 13 No 5, 1550036, 19~p. (2015).	
  
\bibitem{FS23}
  Yu.~A.	Farkov,  M.~A.  Skopina,	Step wavelets on Vilenkin groups.
  J. Math. Sci.,  266, No 5,  696-708 (2022).
  
\bibitem{Vil} 
  N. Ya. Vilenkin, On a class of complete orthonormal systems, Izv. Akad. Nauk SSSR,
  10 Ser. Mat. 11 (1947)  (in Russian), Amer. Math. Soc. Transl. 28,  No 2 (1963).
  

\bibitem{27}
  Farkov, Yu. A.: Constructions of MRA-based wavelets and frames in Walsh analysis, Poincare J. Anal. Appl., No. 2, Spec. Iss., 13-36 (2015).
  
  
\bibitem{25}
  Lukomskii, S. F., Berdnikov, G. S.: N-Valid trees in wavelet theory on Vilenkin groups, Intern. J. Wavelets Multiresolut. Inf. Process. 13 No 5,
  Article ID 1550037, 23 p. (2015)
  
\bibitem{26}
  Lukomskii, S. F.: Step refinable functions and orthogonal MRA on p-adic Vilenkin groups, J. Fourier Anal. Appl. 20, No 1,  42-65 (2014).
  
  
\bibitem{VS} 	
  A.M.Vodolazov, M.A. Skopina. Generalized Vilenkin groups,  Math. Notes,  
  116, No 4,  489–503 (2024). 
  
\bibitem{BS} 	
  M. Babushkin, M. Skopina. Wavelet frames on generalized Vilenkin groups, accepted to   Lobachevskii Journal of Mathematics 45, No 12,  6027–6040 (2024). 

\end{thebibliography}
\end{document}